\newcommand{\R}{\mathbb R}
\newcommand{\Z}{\mathbb Z}
\newcommand{\Ima}{{\rm Im}}
\newtheorem{theorem}{Theorem}[section]
\newtheorem*{theorem-a}{Theorem A}
\newtheorem*{theorem-b}{Theorem B}
\newtheorem{corollary}[theorem]{Corollary}
\theoremstyle{remark}
\newtheorem{remark}{Remark}[section]
\theoremstyle{definition}
\numberwithin{equation}{section}
\begin{document}

\title[Asymptotic behavior]{On long time behavior of solutions of the Schr\"{o}dinger-Korteweg-de Vries system.}
\author{F. Linares}
\address[F. Linares] {IMPA\\ Estrada Dona Castorina 110, Rio de Janeiro 22460-320, Brazil}
\email{linares@impa.br}
\author{A.J. Mendez}
\address[A. J. Mendez] {Centro de Modelamiento Matem\'atico, Universidad de Chile, Santiago de Chile, Chile}
\email{amendez@dim.uchile.cl }

\keywords{Asymptotic behavior, decay of solutions, Schrödinger, Korteweg-de Vries}

\begin{abstract}  We are concerned with the decay of long time solutions of the initial value problem associated to the
Schr\"odinger-Korteweg-de Vries system. We use recent techniques in order to show that solutions of this system 
decay to zero in the energy space. The result is independent of the integrability of the equations involved and it does not 
require any size assumptions.
\end{abstract}

\maketitle

\section{Introduction}

We consider the Schr\"odinger-Korteweg-de Vries system,

\begin{equation}\label{s-kdv}
\left\{
\begin{array}{ll}
i\partial_{t}u+\partial_{x}^{2} u=\alpha uv +\beta u|u|^{2}, & \hskip15pt x, t \in  \mathbb{R}, \\
\partial_{t}v+ \partial_{x}^{3}v+v\partial_{x}v=\gamma \partial_{x}\left(|u|^{2}\right),&  \\
u(x,0)=u_{0}(x), v(x,0)=v_{0}(x),
\end{array} 
\right.
\end{equation}
where $u=u(x,t)$ is a complex-valued function, $v=v(x,t)$ is a real-valued function and $\alpha,\beta,\gamma$ are  real constants. 

\medskip

The system in \eqref{s-kdv} appears as a particular case (under appropriate transformations) of the more general system 
\begin{equation*}
\begin{cases}
i\partial_t S+ic_S\,\partial_x S+\partial_x^2 S=\alpha SL+\gamma\vert S\vert^2S,\\ 
\\ \partial_t L+c_L\,\partial_x L+\nu P(D_x)L+\lambda \partial_x L^2=\beta \partial_x \vert S\vert^2,
\end{cases}
\end{equation*}
where $S$ is a complex-valued function representing the short wave, $L$ is a real-valued function representing the long wave and $P(D_x)$ is a differential operator with constant coefficients and $c_S, c_L,\nu,\lambda,\beta, \alpha,\gamma$ are real parameters, see \cite{BeOgPo, LiPa} and references therein. 
This system has received considerable attention because of the broad diversity of physical settings in which it arises. 
For instance, as modelling the internal gravity-wave packet and the capillary-gravity interaction wave see  \cite{DjRe, Gr, KaSuKa}). Furthermore, when $\gamma=0$ the previous system has been derived as a model for the resonant ion-sound/Langmuir wave interaction in plasma physics under the assumption that the ion-sound wave is unidirectional (see \cite{Ma,NiHoMiIk}). Moreover, this system appears in the general theory of water wave interaction in a nonlinear medium. Finally, this system also occurs as a model for the motion of two fluids under capillary-gravity waves in a deep water flow (see \cite{FuOi}) or the motion of two fluids under a shallow water flow (see \cite{FuOi}).

\medskip

The Schr\"odinger-Korteweg-de Vries system \eqref{s-kdv} has been shown not to be a completely integrable system (see \cite{BeBu}). Even though their
solutions satisfy the following conserved quantities, 
\begin{equation}\label{e1}
\begin{split}
&I_{1}[t]:=\int_{\mathbb{R}}|u|^{2}\,\mathrm{d}x= I_{1}[0],\\
&I_{2}[t]:=\int_{\mathbb{R}}\left\{ \alpha\gamma v|u|^{2}-\frac{\alpha}{6}v^{3}+\frac{\beta\gamma}{2}|u|^{4}+\frac{\alpha}{2}|\partial_{x}v|^{2}+ \gamma|\partial_{x}u|^{2}\right\}\,\mathrm{d}x= I_{2}[0],\\
&I_{3}[t]:=\int_{\mathbb{R}}\left\{\alpha v^{2}+2\gamma\Ima\left(u\overline{\partial_{x}u}\right)\right\}\,\mathrm{d}x=I_{3}[0].
\end{split}
\end{equation}

\medskip

The IVP \eqref{s-kdv} has been extensively studied from the view point of local and global well-posedness. This is mainly because of the
wide research developed for the famous Korteweg-de Vries (KdV) equation
$$
v_t+v_{xxx}+vv_x=0
$$
and the cubic Schr\"odinger (NLS) equation 
$$
iu_t+u_{xx}+|u|^2u=0,
$$
for optimal results see \cite{KePoVe2}, \cite{KV},  \cite{Ts}. For further references  see \cite{Bou}, \cite{LiPo}, \cite{SS}, and \cite{Tao}.
This has served of source of inspiration for several authors to study the IVP \eqref{s-kdv}. In general, a coupled system like \eqref{s-kdv} is more 
difficult to handle in the same spaces as in the space the single equation is solved. In the case of the system \eqref{s-kdv} this is due to the 
antisymmetric nature of the characteristics of each linear part. The first result recorded concerning local well-posedness is due to M. Tsutsusmi \cite{T} 
where the author used the smoothing properties for the KdV and NLS equations to proved the local well-posedness for IVP \eqref{s-kdv} for data $(u_0,v_0)\in H^{s+\frac12}(\R)\times H^s(\R)$ for $s\in \Z^{+}$. In \cite{BeOgPo2} Bekiranov, Ogawa and Ponce employed the Fourier restriction method showed that the coupled system \eqref{s-kdv} is locally well-posed in $H^s(\mathbb{R})\times H^{s-\frac{1}{2}}(\mathbb{R})$ with $s\geq 0$. In \cite{CL} Corcho and Linares extended this result for weak initial data $(u_0,v_0)\in H^k(\mathbb{R})\times H^s(\mathbb{R})$ for various values of $k$ and $s$, where the lowest admissible values are $k=0$ and $s=-\tfrac{3}{4}+\delta$ with $0<\delta\leq \tfrac{1}{4}$. The end-point $(k,s)=(0,-\tfrac{3}{4})$ was treated in \cite{GuWa} 
by Z. Guo and Y. Wang. In \cite{Wu} Wu extended the local results in \cite{CL} for $\beta=0$ in \eqref{s-kdv}.

\medskip

Regarding global well-posedness, the conserved quantities were used in \cite{T} to extend the local theory globally for initial data in $H^{s+\frac12}(\R)\times H^s(\R)$ for $s\in \Z^{+}$. Global well-posedness in  the energy space $H^1(\R)\times H^1(\R)$ was established in \cite{CL}. Pecher  in \cite{Pe} proved global results in $H^s(\R)\times H^s(\R)$, $s>3/5$, for $\beta=0$ and $s>2/3$ for $\beta\neq 0$ by using the I-method. Finally,  Wu  (\cite{Wu}) shows global well-possedness in $H^s(\R)\times H^s(\R)$, $s>1/2$, for any $\beta\in \R$.

\medskip
 
We also notice that local/global well-posedness results in weighted Sobolev spaces are also known for this system (\cite{LiPa}).

\medskip

For further reference we present the local and global well-posedness theory obtained in \cite{CL}

\begin{theorem-a}[\cite{CL}] Let $k\ge 0$ and $s>-\frac34$. Then for any $(u_0, v_0)\in H^k(\R)\times H^s(\R)$ provided:
\begin{itemize}
\item[(i)] $k-1\le s\le 2k-\frac12$ for $k\in [0,\frac12]$,
\item[(ii)] $k-1\le s< k+\frac12$ for $k\in (\frac12,\infty)$,
\end{itemize}
there exist a positive time $T=T(\|u_0\|_{H^k}, \|v_0\|_{H^s})$ and a unique solution $(u(t), v(t))$ of the IVP \eqref{s-kdv}, satisfying
\begin{equation}\label{TA-1}
\varphi_T(t)u\in X^{k, \frac12+} \hskip10pt\text{and}\hskip10pt \varphi_T(t)u\in Y^{k, \frac12+},
\end{equation}
\begin{equation}\label{TA-2}
u\in C([0,T]; H^k(\R))  \hskip10pt\text{and}\hskip10pt  v\in C([0,T]; H^s(\R)).
\end{equation}

Moreover, the map $(u_0, v_0) \mapsto (u(t), v(t))$ is locally Lipschitz from $H^k(\R)\times H^s(\R)$ into $C([0,T]; H^k(\R)\times H^s(\R))$.
\end{theorem-a}

The spaces $X^{k, \frac12+}$ and $Y^{k, \frac12+}$ will be defined below. 

\medskip 

Regarding the global theory we have.

\begin{theorem-b}[\cite{CL}]\label{th1} Let $\alpha, \beta, \gamma \in \R$ such that $\alpha\cdot\gamma>0$ and $(u_0,v_0)\in H^1(\R)\times H^1(\R)$.
Then, the unique solution provided by Theorem A can be extended for any time $T>0$. Moreover,
\begin{equation}\label{global-bound}
\sup_{t}\big (\|u(t)\|_{H^1}+\|v(t)\|_{H^1}\big) \le \Psi(\|u_0\|_{H^1},\|v_0\|_{H^1}),
\end{equation}
where $\Psi$ is a function only depending on $\|u_0\|_{H^1}$ and $\|v_0\|_{H^1}$.
\end{theorem-b}

\medskip

Next, we comment on solitary wave solutions or ground states for system \eqref{s-kdv}.  Solitary wave solutions of the
form
\begin{equation}\label{sws}
(u(x,t), v(x,t))=(e^{i\omega t} e^{ic(x-ct)}\phi(x-ct),\psi(x-ct)),
\end{equation}
have been found for the system \eqref{s-kdv}. In \cite{AlAn} the authors
consider the system with $\beta=0$. They show the existence of solitary wave solutions and prove the orbital stability of these
solution for the parameters $\alpha, \gamma<0$. The existence of  ground states for the system with $\beta\neq 0$ was established
in \cite{DFO1} again for the parameters $\alpha, \gamma<0$. The stability of these ground states was proved in \cite{AlBs}, (see also
\cite{CPS}, \cite{DFO2}).

\medskip

\medskip

In particular, for a suitable parameter $c^{*}$, $ \alpha\in (-\frac16, 0)$ and $c=4c^{*}-\frac{1}{12}\alpha(1+6\alpha)$,
the authors in \cite{DFO1} found the following explicit solution for the system \eqref{s-kdv}, 
\begin{equation*}
\phi(x)=\frac{\sqrt{2c^{*}(1+6\alpha)}}{\cosh (\sqrt{c^{*}}\,x)}, \hskip10pt\text{and}\hskip10pt
\psi(x)=\frac{12c^{*}}{\cosh^2(\sqrt{c^{*}}\,x)}.
\end{equation*}

\medskip 

In this work, we are interested in the asymptotic behaviour of solutions to the system \eqref{s-kdv}. As far as we know there are not
results regarding this issue for the global solutions obtained in \cite{CL} for this system. We will show decay properties for solutions of the
system \eqref{s-kdv} as time evolves. 

\medskip 

Our work is inspired by recent results obtained for several dispersive equations and systems of dispersive equations and the techniques
implemented to establish those results. We shall mention the works of Mu\~noz and Ponce for the generalised KdV equation (\cite{MP1}) and the
Benjamin-Ono (BO) equation (\cite{MP2}). Where the authors show the decay of the solutions to these equations. In particular, their results rule out
the possibility to have breathers solutions. The methods used were motivated by the works of Kowalczyk, Martel and Mu\~noz \cite{KMM}-\cite{KMM1}.
In  \cite{LiMePo}, Linares, Mendez and Ponce used the approach developed in \cite{MP2} to study the the decay of solutions to the dispersion 
generalized BO equation.  New techniques were introduced by Mu\~noz, Ponce and Saut \cite{MuPoSa} to investigate the long time behavoir issue
for solutions to the Intermediate Long Wave (IWL) equation. Extensions to higher dimensional model as  the Zakharov-Kusnetsov (ZK) equation were
made by Mendez, Mu\~noz, Poblete and Pozo in \cite{MMPP}.

 Martinez in \cite{MM} considered the 1-dimensional Zakharov system
\begin{equation}\label{ZS}
\begin{cases}
iu_t+u_{xx}=nu,\hskip25pt x,t\in \R,\\
n_{tt}-n_{xx}=(|u|^2)_{xx}.
\end{cases}
\end{equation}
Under some smallness assumptions it is proved that in compact sets the solutions of \eqref{ZS} decay to zero as time tends to infinity. In addition, the author established decay in far field regions along curves.

\section{Main Results}

Our first result regards the decay of global solutions in $H^1(\R)\times H^1(\R)$ in the $L^2-$norm. More precisely.

\begin{theorem}\label{main}
Let $\alpha, \gamma <0$.  Suppose  that  	 $(u_{0},v_{0})\in H^{1}(\mathbb{R})\times H^1(\mathbb{R}).$   Then, the corresponding solution $(u,v) $  to \eqref{s-kdv} with initial condition $(u_{0},v_{0})$ satisfies
	 \begin{equation*}
	 \liminf_{t\uparrow \infty} \int_{\Omega_{p_{1}}(t)} v^{2}(t,x) \mathrm{d}x=	 \liminf_{t\uparrow \infty} \int_{\Omega_{p_{1}}(t)} |u|^{2}(t,x)\,\mathrm{d}x=0,
	 \end{equation*}
	 where 
	 \begin{equation}\label{set}
	 \Omega_{p_{1}}(t)=\{x\in\mathbb{R}\,|\, |x|\lesssim t^{p_{1}} \}\quad\mbox{with}\quad 0<p_{1}< \frac{2}{3}.
	 \end{equation}
	\end{theorem}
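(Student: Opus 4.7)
The plan is to implement a time-dependent virial identity in the spirit of Mu\~noz-Ponce~\cite{MP1,MP2} and Martinez~\cite{MM}, adapted to the coupling present in \eqref{s-kdv}. Fix $p_{1}\in(0,2/3)$, set $\l(t):=\langle t\rangle^{p_{1}}$, and pick a smooth weight $\phi\in C^{\infty}(\R;\R)$ such that $\phi'$ is a nonnegative bump concentrated around the origin (e.g.\ the antiderivative of $\mathrm{sech}^{2}$). The localized mass functionals
\[
\mathcal{M}_{v}(t):=\int_{\R}\phi\bigl(x/\l(t)\bigr) v(t,x)^{2}\,dx,\qquad
\mathcal{M}_{u}(t):=\int_{\R}\phi\bigl(x/\l(t)\bigr) |u(t,x)|^{2}\,dx
\]
are uniformly bounded in $t$ by Theorem~B (applicable since $\alpha,\g<0$ yields $\alpha\g>0$) together with the conservation of $I_{1}$.

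Differentiating $\mathcal{M}_{v}$ along \eqref{s-kdv} and integrating by parts three times in the KdV contribution yields
\[
\frac{d}{dt}\mathcal{M}_{v}(t)=-\frac{3}{\l(t)}\int_{\R}\phi'(x/\l)\,v_{x}^{2}\,dx+\mathcal{R}_{v}(t),
\]
where $\mathcal{R}_{v}$ gathers the weight-motion error $-(\l'/\l^{2})\int x\phi'(x/\l)v^{2}\,dx$, the dispersive lower-order term $\l^{-3}\int\phi'''(x/\l)v^{2}\,dx$, the self-interaction $(3\l)^{-1}\int\phi'(x/\l)v^{3}\,dx$, and the couplings $-2\g\l^{-1}\int\phi'(x/\l)v|u|^{2}\,dx$ and $-2\g\int\phi(x/\l)v_{x}|u|^{2}\,dx$. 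The derivative of $\mathcal{M}_{u}$ carries only a weight-motion error and a localized-momentum term $2\l^{-1}\int\phi'(x/\l)\,\Ima(\bar{u}u_{x})\,dx$; since $I_{1}$ is conserved and the Schr\"odinger nonlinearities are gauge-invariant they produce no contribution.

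The uniform bound $\|u\|_{L^{\infty}}+\|v\|_{L^{\infty}}\lesssim 1$ coming from Theorem~B and Gagliardo-Nirenberg controls every error with factor $\l^{-k}$, $k\ge 1$. The genuinely obstructing term is $-2\g\int\phi(x/\l)v_{x}|u|^{2}\,dx$, which carries no $\l^{-1}$. To absorb it I would introduce the correction
\[
\mathcal{N}(t):=\int_{\R}\phi(x/\l(t))\,v(t,x)\,|u(t,x)|^{2}\,dx
\]
and compute $\mathcal{N}'(t)$ using both equations of \eqref{s-kdv}; this derivation produces a matching $\int\phi v_{x}|u|^{2}$ contribution with opposite sign, which is exactly the mechanism behind the conservation of the coupling density $\alpha\g v|u|^{2}$ in $I_{2}$. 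Forming the Lyapunov functional $\mathcal{L}(t):=A\,\mathcal{M}_{v}(t)+B\,\mathcal{M}_{u}(t)+C\,\mathcal{N}(t)$ with coefficients tuned so that $\alpha\g>0$ guarantees the cancellation, the resulting inequality takes the form
\[
\frac{d}{dt}\mathcal{L}(t)\le -\frac{c_{0}}{\l(t)}\int_{\R}\phi'(x/\l)\bigl(v_{x}^{2}+|u_{x}|^{2}\bigr)\,dx+\mathcal{E}(t),
\]
with $\mathcal{E}\in L^{1}([1,\infty))$ exactly when $p_{1}<2/3$; this threshold is forced by the joint balance between the dispersive correction $\l^{-3}\|\phi'''\|_{\infty}\sim t^{-3p_{1}}$, the extra errors of order $t^{-p_{1}}$ generated by $\mathcal{N}'$, and the weight-motion term $\l'/\l\sim t^{-1}$ acting on an $L^{\infty}$-controlled quadratic density.

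Integrating this inequality from $1$ to $T$ and using the uniform boundedness of $\mathcal{L}$ yields $\int_{1}^{\infty}\l(t)^{-1}\int\phi'(x/\l)(v_{x}^{2}+|u_{x}|^{2})\,dx\,dt<\infty$. A weighted Gagliardo-Nirenberg interpolation combined with the conservation of $\|u\|_{L^{2}}$ and the bound on $\|v\|_{L^{2}}$ promotes this $\dot H^{1}$-local-smoothing statement to the $L^{2}$-level estimate
\[
\int_{1}^{\infty}\frac{1}{t}\!\left(\int_{|x|\lesssim t^{p_{1}}}v^{2}\,dx+\int_{|x|\lesssim t^{p_{1}}}|u|^{2}\,dx\right)dt<\infty,
\]
and since $\int_{1}^{\infty}dt/t=\infty$ the integrand must have $\liminf=0$, giving the theorem. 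The main obstacle is the explicit construction and control of the correction $\mathcal{N}$: computing $\mathcal{N}'$ generates several KdV-dispersive terms such as $\int\phi' v_{xx}|u|^{2}$ which must be re-integrated by parts and compared against the Schr\"odinger contribution in $\mathcal{M}_{u}'$ to close the cancellation without overshooting the integrability budget, and this delicate bookkeeping is precisely where both the sign hypothesis $\alpha,\g<0$ and the threshold $p_{1}<2/3$ are genuinely used.
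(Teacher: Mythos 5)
There is a genuine gap, and it sits exactly at the step your whole argument funnels into. Your scheme produces (at best) the time-integrated bound $\int_1^\infty \lambda^{-1}\int\phi'(x/\lambda)\,(v_x^2+|u_x|^2)\,dx\,dt<\infty$, i.e.\ a local-smoothing statement at the $\dot H^1$ level, and you then claim to ``promote'' it to $\int_1^\infty t^{-1}\int_{|x|\lesssim t^{p_1}}(v^2+|u|^2)\,dx\,dt<\infty$ by weighted Gagliardo--Nirenberg plus mass conservation. No such promotion exists: interpolation inequalities bound intermediate norms \emph{by} the $L^2$ and $\dot H^1$ quantities, never the localized $L^2$ mass \emph{by} localized derivative norms (a flat bump has tiny local $\dot H^1$ density and large local mass). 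The paper never faces this problem because its functional for Theorem \ref{main} is \emph{linear} in $v$, namely $\mathcal{J}$ in \eqref{functional-1}, a weighted integral of $v$ itself normalized by $\eta(t)\sim t^{1-p_1}$. Differentiating $\mathcal{J}$ along the flow, the Burgers nonlinearity contributes $\tfrac12\int v^2\,\partial_x W$ and the coupling $\gamma\partial_x(|u|^2)$ contributes $-\gamma\int|u|^2\,\partial_x W$ with $W=\varphi_a(x/\lambda_1)\phi_b(x/\lambda_2)$; since $\gamma<0$ and $\partial_x W>0$ these are \emph{both sign-definite quadratic densities at the $L^2$ level}, which after absorbing the weight-motion terms (this is where $p_1<2/3$ enters, via $(\lambda_1')^2/\eta\in L^1$) yields \eqref{e6.1} directly, and the non-integrability of $1/(\eta\lambda_1)\sim 1/(t\ln t)$ gives the liminf. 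Your quadratic functionals can never produce $v^2$ and $|u|^2$ with a sign; they produce $v_x^2$ and momentum flux, which is the content of Theorem \ref{main3}, not Theorem \ref{main}.

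Two further points would also need repair even for the derivative-level estimate you do target. First, the coercive term $\int\phi'|u_x|^2$ never appears in your computation: as you yourself note, $\tfrac{d}{dt}\mathcal{M}_u$ gives only the localized momentum $\Ima(\bar u\,\partial_x u)$, and $\mathcal{N}'$ does not generate $|u_x|^2$ either; to produce it one must differentiate the localized \emph{momentum} functional, which is exactly what the paper does in \eqref{functional-2} (taking $\mu=\gamma\theta/\alpha$ to cancel the $\int \partial_x v\,|u|^2$ coupling, which is where $\alpha\gamma>0$ is used). Second, your error budget is too optimistic: the weight-motion term $\frac{\lambda'}{\lambda}\int\frac{x}{\lambda}\phi'(x/\lambda)v^2\,dx$ is of size $t^{-1}$ times a non-decaying local mass, and the cubic term $\lambda^{-1}\int\phi' v^3$ is only of size $t^{-p_1}$ under the $L^\infty$ bound; neither is in $L^1(\{t\gg1\})$ by itself. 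In the paper these terms are handled only because (i) the functional is normalized by $1/\eta(t)$, and (ii) the already-established estimate \eqref{e6.1} is available to absorb the quadratic pieces, together with a Kenig--Martel-type summation and localized Gagliardo--Nirenberg argument for the $v^6$ piece. So the logical order matters: the $L^2$-level decay of Theorem \ref{main} must be proved first, by the linear functional, and then feeds the proof of the $H^1$-level decay; your proposal tries to run this dependency backwards.
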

	
As as corollary of Theorem \ref{main} we obtain the following  result  in the non-centered case.
\begin{corollary}\label{cor-main}
Let $\alpha, \gamma$  be real numbers such that  $\alpha,\gamma<0.$ Suppose  that  $(u_{0},v_{0})\in H^{1}(\mathbb{R})\times H^{1}(\mathbb{R}).$   
Then, the corresponding solution $(u,v) $  to \eqref{s-kdv} with initial condition $(u_{0},v_{0})$ satisfies 
\begin{equation*}
\liminf_{t\uparrow \infty} \int_{\Gamma_{p_{1}}(t)} (v(t,x))^{2}\mathrm{d}x=	\liminf_{t\uparrow \infty} \int_{\Gamma_{p_{1}}(t)} \left|u(t,x)\right|^{2}\,\mathrm{d}x=0,
\end{equation*}
where 
\begin{equation*}
\Gamma_{p_{1}}(t)=\{x\in\mathbb{R}\,|\, |x-t^{m}|\lesssim t^{p_{1}} \},\quad 0<p_{1}< \frac{2}{3} \quad \mbox{and}\quad 0<m<1-\frac{p_{1}}{2}.
\end{equation*}
\end{corollary}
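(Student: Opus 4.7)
The goal is to extend Theorem \ref{main} by allowing the reference point to drift as $x=t^m$ with $0<m<1-p_1/2$, rather than stay at the origin. My plan is to adapt the weighted $L^2$ (virial) estimates that prove Theorem \ref{main}, but center the cutoff at $x=t^m$ instead of at $x=0$.

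First, I would introduce the translated cutoff
\begin{equation*}
\chi_t(x) := \chi\!\left(\frac{x-t^{m}}{t^{p_{1}}}\right),\qquad t\ge 1,
\end{equation*}
where $\chi\in C^\infty_c(\R)$ is the smooth bump from Theorem \ref{main}, and form the translated virial quantity
\begin{equation*}
J(t):=\int_{\R}\chi_t(x)\bigl(c\,|u(t,x)|^2 + v^2(t,x)\bigr)\,\mathrm{d}x,
\end{equation*}
with $c>0$ the coupling constant selected in Theorem \ref{main} using $\alpha,\gamma<0$. Computing $\frac{d}{dt}J(t)$ via \eqref{s-kdv} and integrating by parts reproduces the dispersive and nonlinear terms of the centered identity, plus one new ``moving frame'' remainder coming from $\partial_t\chi_t$.

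Next, I would control this extra remainder. A direct computation gives
\begin{equation*}
\partial_t\chi_t(x)=-\chi'\!\left(\tfrac{x-t^m}{t^{p_1}}\right)\!\left[\tfrac{m\,t^{m-1}}{t^{p_1}}+\tfrac{p_1(x-t^m)}{t^{p_1+1}}\right],
\end{equation*}
and on the support of $\chi'$ one has $|x-t^m|\lesssim t^{p_1}$, so the bracket is $O(t^{m-1-p_1}+t^{-1})$. Combined with the conservation of $\|u(t)\|_{L^2}$ and the uniform $H^1$-bound \eqref{global-bound} on $v$, this yields
\begin{equation*}
\left|\int_{\R}\partial_t\chi_t(x)\bigl(c|u|^2+v^2\bigr)\,\mathrm{d}x\right|\lesssim t^{m-1-p_1}+t^{-1}.
\end{equation*}

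Finally, I would integrate the virial identity from $1$ to $T$ and send $T\to\infty$. The constraint $m<1-p_1/2$ (together with $p_1<2/3$) is exactly what is needed to force the geometric error above to be of lower order than the coercive terms extracted from the centered part of the identity, so the bootstrap that closed Theorem \ref{main} also closes here. One then recovers a time-integrated bound of the form
\begin{equation*}
\int_1^\infty \!F(t)\!\left(\int_{\Gamma_{p_1}(t)}\!\bigl(|u|^2+v^2\bigr)(t,x)\,\mathrm{d}x\right)\mathrm{d}t<\infty
\end{equation*}
for a suitable positive weight $F$, which as in Theorem \ref{main} forces the liminf of the local mass on $\Gamma_{p_1}(t)$ to vanish.

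The main obstacle will be to verify that the sign cancellations on which the centered virial identity of Theorem \ref{main} relies, in particular the interaction between the Schr\"odinger and KdV contributions that uses $\alpha,\gamma<0$, remain intact after translating the weight. A secondary but important point is the exponent arithmetic: the range $m<1-p_1/2$ must be justified by carefully bookkeeping the time decay of the convective and commutator remainders produced by differentiating $\chi_t$ in time; any weaker decay would leave a non-integrable error and break the argument.
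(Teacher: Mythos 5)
Your proposal does not follow the paper's argument, and more importantly it does not work as stated. The proof of Theorem \ref{main} is not based on a localized quadratic mass $\int\chi_t\,(c|u|^2+v^2)\,\mathrm{d}x$ with a compactly supported bump and a coupling constant $c$; there is no such object in that proof. It is based on the functional \eqref{functional-1}, which is \emph{linear} in $v$, weighted by the monotone function $\varphi_{a}$ (whose derivative is a positive, sech-type weight) times $\phi_{b}$ at two different scales $\lambda_1,\lambda_2$, and normalized by $\eta(t)$ as in \eqref{e5}. The sign-definite terms in \eqref{e6.1} arise precisely because the nonlinearities $v\partial_x v$ and $\gamma\partial_x(|u|^2)$ land on the everywhere-positive factor $\varphi_a'$ (using $\gamma<0$); with your compactly supported $\chi$ the derivative $\chi'$ changes sign, so your identity has no coercive term at all. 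Moreover, for your density $c|u|^2+v^2$ the KdV equation produces the coupling term $2\gamma\int\chi_t\,v\,\partial_x(|u|^2)\,\mathrm{d}x$, while the Schr\"odinger mass only contributes the momentum flux $\partial_x\,\Ima(\bar u\,\partial_x u)$; no choice of the constant $c$ cancels the former against the latter. (In the paper that cancellation is achieved in the proof of Theorem \ref{main3} by pairing $v^2$ with the momentum density $\Ima(u\overline{\partial_x u})$, cf. \eqref{functional-2} and the choice $\mu=\gamma\theta/\alpha$, not with $|u|^2$.) So the ``sign cancellations'' you postpone to the end are not features to be preserved under translation: they simply are not present in the functional you chose.

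Even granting a coercive identity, your treatment of the moving-frame term does not close. The bound $t^{m-1-p_1}+t^{-1}$ is not integrable on $\{t\gg1\}$: the $t^{-1}$ piece never is, and $t^{m-1-p_1}$ need not be for $p_1<\tfrac23$ and $m<1-\tfrac{p_1}{2}$ (take $p_1=\tfrac12$, $m$ close to $\tfrac34$). ``Lower order than the coercive terms'' is not a usable criterion here: the whole mechanism requires every error term to be either integrable in time or absorbable, via Young's inequality against the weight $\varphi_a'$, into the coercive quantity, while the coercive weight $\frac{1}{\eta(t)\lambda_1(t)}\sim\frac{1}{t\ln t}$ is \emph{not} integrable; this is exactly where the logarithmic factors in \eqref{e5} are used, and your functional has neither the $1/\eta(t)$ normalization nor these factors. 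The paper's proof of the corollary is the minimal modification of Theorem \ref{main}: keep the functional \eqref{functional-1} but evaluate the weights at $\frac{x-t^m}{\lambda_i(t)}$. The only genuinely new term in $\frac{\mathrm{d}}{\mathrm{d}t}\mathcal{J}$ carries the factor $\frac{m\,t^{m-1}}{\eta(t)\lambda_1(t)}$ against $|v|\,\varphi_a'\phi_b$; Young's inequality with $\epsilon\sim\delta\,t^{1-m}$ absorbs one piece into the coercive term, and the remaining piece is of size $t^{2m-2}/\eta(t)\sim t^{2m-3+p_1}\ln^{-r_2}t$, which is integrable precisely when $m<1-\frac{p_1}{2}$. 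That is where the restriction on $m$ actually comes from, and your bookkeeping does not reproduce it.
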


\medskip

The second main result in this work tell us about the decay of the first derivatives in the $L^2-$norm of solutions in $H^1(\R)\times H^1(\R)$ of the
IVP \eqref{s-kdv}.	
\begin{theorem}\label{main3}
	Let $\alpha, \gamma$  be real numbers such that  $\alpha,\gamma<0.$ Suppose  that  $(u_{0},v_{0})\in H^{1}(\mathbb{R})\times H^{1}(\mathbb{R}).$   Then, the corresponding solution $(u,v) $  to \eqref{s-kdv} with initial condition $(u_{0},v_{0})$ satisfies: 
	\begin{itemize}
		\item[(i)] If $\beta>0,$ then 
	\begin{equation}
	\liminf_{t\uparrow \infty} \int_{\Omega_{p_{1}}(t)} (\partial_{x}v(t,x))^{2}\mathrm{d}x=	\liminf_{t\uparrow \infty} \int_{\Omega_{p_{1}}(t)} \left|\partial_{x}u(t,x)\right|^{2}\,\mathrm{d}x=0,
	\end{equation}
	and 
	\begin{equation}\label{e.1}
		\liminf_{t\uparrow \infty} \int_{\Omega_{p_{1}}(t)} |u(t,x)|^{4}\mathrm{d}x=0,
	\end{equation}
	where 
	\begin{equation}\label{set2}
	\Omega_{p_{1}}(t)=\{x\in\mathbb{R}\,|\, |x|\lesssim t^{p_{1}} \}\quad\mbox{with}\quad 0<p_{1}< \frac{2}{3}. 
	\end{equation}
	\item[(ii)] If $\beta\leq0,$ then 
	\begin{equation*}
	\liminf_{t\uparrow \infty} \int_{\Omega_{p_{1}}(t)} (\partial_{x}v(t,x))^{2}\mathrm{d}x=	\liminf_{t\uparrow \infty} \int_{\Omega_{p_{1}}(t)} \left|\partial_{x}u(t,x)\right|^{2}\,\mathrm{d}x=0,
	\end{equation*}
	where $\Omega_{p_{1}}(t)$ is as in \eqref{set2}.
\end{itemize}
\end{theorem}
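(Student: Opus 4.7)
The plan is to upgrade the virial argument used for Theorem \ref{main} from the level of the conserved mass $I_1$ to the level of the conserved energy $I_2$ in \eqref{e1}. Concretely, I would fix $\lambda(t):=t^{p_1}$ with $0<p_1<2/3$, pick a smooth, bounded weight $\phi$ adapted to $\Omega_{p_1}(t)$ (for instance an odd sigmoid with $\phi'\geq 0$ and $\phi'(y)\gtrsim 1$ on $|y|\leq 1$), and define the localized energy functional
\[
\mathcal{F}(t):=\int_{\R}\phi\!\left(\frac{x}{\lambda(t)}\right)\Theta(u,v)(t,x)\,\mathrm{d}x,
\qquad
\Theta(u,v):=\gamma|\partial_x u|^2+\tfrac{\alpha}{2}|\partial_x v|^2+\alpha\gamma v|u|^2-\tfrac{\alpha}{6}v^3+\tfrac{\beta\gamma}{2}|u|^4,
\]
so that $\Theta$ is precisely the density of the conserved quantity $I_2$. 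By Theorem B, $(u,v)$ remains uniformly bounded in $H^1\times H^1$, hence $|\mathcal{F}(t)|\lesssim 1$ uniformly in $t$.

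The key computation is $\frac{d}{dt}\mathcal{F}(t)$. I would substitute the equations \eqref{s-kdv} for $u_t$ and $v_t$, integrate by parts repeatedly to move all spatial derivatives onto the weight, and collect the $\partial_t\phi(x/\lambda)$ contribution separately. The sign hypothesis $\alpha,\gamma<0$ (so $-\alpha,-\gamma,\alpha\gamma>0$) is then used to extract a positive definite quadratic form in $\partial_x u,\partial_x v$: the Schr\"odinger $\partial_x^2 u$ piece contributes $(-\gamma)\phi'(x/\lambda)|\partial_x u|^2/\lambda$ and the KdV $\partial_x^3 v$ piece contributes a constant multiple of $(-\alpha)\phi'(x/\lambda)|\partial_x v|^2/\lambda$, both non-negative and supported on $\Omega_{p_1}(t)$. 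In case (i) with $\beta>0$, the quartic term of $\Theta$ has the favorable sign $\beta\gamma<0$ and adds an extra non-negative contribution of the same shape with $|u|^4$, which is what ultimately yields \eqref{e.1}.

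All remaining pieces of $\frac{d}{dt}\mathcal{F}$ (the $\partial_t\phi(x/\lambda)$ term of order $t^{-1}$ against the energy density, the cubic fluxes generated by $v\partial_x v$, by $\alpha uv$ and $\alpha\partial_x(|u|^2)$, and, in case (ii), the unsigned quartic $\tfrac{\beta\gamma}{2}|u|^4$) are treated as errors, estimated via Sobolev embedding $\|u\|_{L^\infty}+\|v\|_{L^\infty}\lesssim 1$ coming from Theorem B, together with the $L^2$-virial bound already supplied by Theorem \ref{main} to absorb any $v^2\phi'/\lambda$-type contributions. Integrating from $1$ to $T$ and using the uniform bound on $\mathcal{F}$, one arrives at
\[
\int_1^\infty\frac{1}{\lambda(t)}\int_{\R}\phi'\!\bigl(x/\lambda(t)\bigr)\bigl(|\partial_x u|^2+|\partial_x v|^2\bigr)(t,x)\,\mathrm{d}x\,\mathrm{d}t<\infty,
\]
with the analogous bound for $|u|^4$ in case (i). Since $\int_1^\infty dt/\lambda(t)=\infty$ for $p_1<1$, the inner spatial integral must have $\liminf_{t\to\infty}$ equal to zero, and the pointwise lower bound $\phi'(x/\lambda(t))\gtrsim 1$ on $\Omega_{p_1}(t)$ transfers this to the statement of the theorem.

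The main obstacle is the careful bookkeeping of the nonlinear flux terms in $\frac{d}{dt}\mathcal{F}$: the coupling between $u$ and $v$ mixes the Schr\"odinger and KdV fluxes, and the cubic KdV nonlinearity produces $v^3\phi'/\lambda$-type integrands that must be absorbed either into the positive quadratic density or into the already-established $L^2$ virial bound from Theorem \ref{main}. The strict inequality $p_1<2/3$ reflects exactly the balance needed between the positive contribution from the $\partial_x^3 v$ dispersion and the worst cubic flux, and matches the analogous constraint in Theorem \ref{main}, confirming that the KdV component is the restrictive one. In case (ii) the difficulty is amplified because $\tfrac{\beta\gamma}{2}|u|^4$ appears in $\mathcal{F}$ with the wrong sign; one compensates by bounding it crudely via $\|u\|_{L^\infty}^2\int\phi(\cdot/\lambda)|u|^2$ and using the $L^2$ decay of Theorem \ref{main}, which is why in (ii) the $|u|^4$ conclusion is lost but the gradient conclusion survives.
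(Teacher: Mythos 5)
There is a genuine gap at the heart of your plan: the localized energy functional $\mathcal{F}(t)=\int\phi(x/\lambda)\,\Theta(u,v)\,\mathrm{d}x$ built from the density of $I_2$ does \emph{not} produce the coercive term $\frac{1}{\lambda}\int\phi'\bigl(|\partial_xu|^2+|\partial_xv|^2\bigr)$ that you claim. A virial computation on a localized conserved density yields $\frac{1}{\lambda}\int\phi'\cdot(\text{flux of that density})$, and the flux of the $H^1$-level energy is one derivative higher than the density: for the KdV part the signed quadratic term is $\sim\frac{1}{\lambda}\int\phi'\,v_{xx}^2$ (second derivatives, useless for the stated conclusion and not controlled by the $H^1$ bound in the remaining cross terms), while for the Schr\"odinger part $\partial_t|u_x|^2=-2\partial_x\Ima(\bar u_xu_{xx})+\dots$, so the localized derivative produces the indefinite flux $\frac{1}{\lambda}\int\phi'\,\Ima(\bar u_xu_{xx})$ rather than $\frac{1}{\lambda}\int\phi'|u_x|^2$. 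To obtain first-derivative local decay one must localize the \emph{third} conserved quantity $I_3$ in \eqref{e1}, i.e. take $\mathcal{I}(t)=\frac{\theta}{2\eta(t)}\int v^2\varphi_l(x/\lambda_1)+\frac{\mu}{\eta(t)}\Ima\int u\overline{\partial_xu}\,\varphi_l(x/\lambda_1)$ as in \eqref{functional-2}: the $v^2$ part gives $-\frac{3\theta}{2\eta\lambda_1}\int(\partial_xv)^2\varphi_l'$ (Kato smoothing) and the momentum part gives $-\frac{2\mu}{\eta\lambda_1}\int|\partial_xu|^2\varphi_l'$, with the crucial extra feature that the two \emph{non-localized} coupling fluxes $\mp\int\partial_xv\,|u|^2\,\varphi_l$ cancel exactly when $\mu=\gamma\theta/\alpha$ (possible precisely because $\alpha\gamma>0$). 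Your proposal contains no mechanism for this cancellation, and with the energy functional the analogous coupling terms do not cancel and cannot be treated as errors.

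Two further points would also need repair even if the functional were changed. First, you drop the normalization $1/\eta(t)$: without it, the term coming from $\partial_t\phi(x/\lambda(t))$ is of size $t^{-1}$ times an $O(1)$ quantity, which is not integrable on $\{t\gg1\}$, so the time integration does not close; the paper's choice $p_1+r_1=1$, $r_2=1+q_1$ is exactly what makes all such errors ($B_5$--$B_8$, $B_{12}$, $B_{13}$) integrable while keeping $\frac{1}{\eta\lambda_1}=\frac{1}{t\ln t}\notin L^1$. Second, the cubic and quartic fluxes $\int v^3\varphi_l'$, $\int|u|^4\varphi_l'$ are not absorbed ``crudely'': the paper splits them by Young's inequality and controls the resulting $\int v^6\varphi_l'$, $\int|u|^6\varphi_l'$ by a localized Gagliardo--Nirenberg argument on unit intervals (the Kenig--Martel trick, using that $\varphi_l'$ is comparable at scale one, see \eqref{eq2.1}), absorbing the derivative pieces into the good terms $B_4$, $B_9$ and the rest into \eqref{e6.1}. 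Finally, the $\beta$-dichotomy in the theorem comes from the sign of the \emph{derivative} term $B_{11}=-\frac{\mu\beta}{2\eta\lambda_1}\int|u|^4\varphi_l'$ (favorable when $\beta>0$, giving \eqref{e.1}; handled by the same localized GNS absorption when $\beta\le0$), not from a sign of $\frac{\beta\gamma}{2}|u|^4$ inside the functional as in your explanation.
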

	Some remarks are in order.
	\begin{remark} Combining the results in Theorem \ref{main} and Theorem \ref{main3} we guarantee the decay
	in the energy space of solutions of the IVP \eqref{s-kdv} for the parameters $\alpha, \gamma<0$.
	\end{remark}
	
	\begin{remark} The approach we follows is closer to the one introduced in \cite{MMPP}.  This method shows to be independent
	of the integrability of the equation and does not need size restriction.
	\end{remark}
	
	\begin{remark} The results above rule out the possibility to have small breather solutions.
	\end{remark}
	
	\begin{remark}
	The idea of the proof of Corollary \ref{cor-main} follows  the argument used in \cite{MMPP}. Thus, it  is enough to define the  functional
	\begin{equation*}
	\mathcal{J}(t):=\frac{1}{\eta(t)}\int_{\mathbb{R}} u(t,x)\varphi_{a}\left(\frac{x-t^{m}}{\lambda_{1}(t)}\right)\phi_{b}\left(\frac{x-t^{m}}{\lambda_{2}(t)}\right)\,\mathrm{d}x
	\end{equation*}
	and then proceed with the analysis described in the proof of Theorem \ref{main}.
\end{remark}
	

	\begin{remark} The case when the parameters $\alpha$ and $\gamma$ are both positive is open.
More precisely,   following the notation introduced in the proof of Theorem \ref{main} (see  Section \ref{proofmain1}) we obtain by means of energy estimates the  bound
\begin{equation}\label{e6.11}
\begin{split}
&\int\limits_{\{t\gg1\}}\frac{1}{\eta(t)\lambda_{1}(t)}\int_{\mathbb{R}}\left(\frac{v^{2}}{4}-\gamma|u|^{2}\right)\varphi_{a}'\Big(\frac{x}{\lambda_{1}(t)}\Big)\phi_{b}\left(\frac{x}{\lambda_{2}(t)}\right)\,\mathrm{d}x
<\infty,
\end{split}
\end{equation}
where $a,b>0.$ However,  since there is  not a defined   sign in the inequality \eqref{e6.11}  we cannot decide  if there is some decay  of the $L^{2}-$mass norms of $u,v.$

We observe from the comments regarding solitary waves that in this case the existence of such
	solutions is unknown.
	\end{remark}
	
\medskip
	
The remainder of this paper is structured as follows, Section 2 will be devoted to prove Theorem \ref{main} and Section 3 will present the
proof of Theorem \ref{main3}. Before leaving this section we will set the notation we employ in this manuscript.	

\vspace{2mm}	
\noindent{\bf Notation.} For $k, s\in \R$ and $b \in (0,1)$ we let $X^{k,s}$ and $Y^{s,b}$ be the completion of $\mathcal{S}(\R^2)$ 
with respect to the norms
$$
\|f\|_{X^{k,b}}=\Big(\int\int \langle \xi\rangle^{2k}\langle \tau+\xi^2\rangle^{2b} |\widehat{f}(\xi,\tau)|^2\,d\tau d\xi\Big)^{1/2}
$$
and
$$
\|f\|_{Y^{s,b}}=\Big(\int\int \langle \xi\rangle^{2s}\langle \tau-\xi^3\rangle^{2b} |\widehat{f}(\xi,\tau)|^2\,d\tau d\xi\Big)^{1/2} 
$$
where $ \langle \cdot \rangle= (1+|\xi|^2)^{1/2}$ and $\widehat{f}(\xi,\tau)$ denotes the Fourier transform in the $x,t$ variables.

\vspace{1cm}

\section{Proof of Theorem \ref{main}}\label{proofmain1}

 \begin{proof}[Proof of Theorem \ref{main}]
 	
	 We follow the argument of proof presented in \cite{MMPP}. Then, we will consider the following  weighted  functions:
 \begin{equation}\label{e3}
 \varphi(x)=\frac{2}{\pi}\arctan(e^{x}),\hskip10pt x\in\mathbb{R},
 \end{equation}
 and 
  \begin{equation}\label{e4}
 \varphi'(x)=\frac{1}{\pi \cosh\left(x\right)}\sim e^{-|x|},\qquad |\varphi'''(x)|\lesssim \varphi'(x),\quad x\in\mathbb{R}.
 \end{equation}
 For $a>0,$ we  define   $\varphi_{a}(x):=a\varphi\left(\frac{x}{a}\right)$ and $\phi_{a}(x):= \varphi'\left(\frac{x}{a}\right).$

 For  $p_{1},r_{1},r_{2},q_{1},q_{2}>0$  we set
 \begin{equation}\label{e5}
 \lambda_{1}(t)=\frac{t^{p_{1}}}{\ln^{q_{1}}t} ,\quad  \lambda_{2}(t)=(\lambda_{1}(t))^{p_{2}}\quad\mbox{and}\quad \eta(t)=t^{r_{1}}\ln^{r_{2}}t, 
 \end{equation}
 whence  
 \begin{equation*}
 p_{1}+r_{1}=1,\quad p_{2}>1\quad \mbox{and}\quad  r_{2}=1+q_{1}. 
 \end{equation*}
 Then
 \begin{equation*}
 \frac{\lambda_{1}'(t)}{\lambda_{1}(t)}\sim \frac{1}{t},\quad  \frac{\lambda_{2}'(t)}{\lambda_{2}(t)}\sim \frac{1}{t}\quad \mbox{and}\quad  \frac{\eta'(t)}{\eta(t)}\sim\frac{1}{t}\quad \mbox{for}\quad t\gg1 .
 \end{equation*}
 For $v=v(t,x)$ solution to \eqref{s-kdv} we define  the functional
 \begin{equation}\label{functional-1}
 \mathcal{J}(t):=\frac{1}{\eta(t)}\,\int_{\mathbb{R}}v(t,x)\varphi_{a}\left(\frac{x}{\lambda_{1}(t)}\right)\phi_{b}\left(\frac{x}{\lambda_{2}(t)}\right)\,\mathrm{d}x,
 \end{equation}
 where $a,b>0.$
 
 We claim that $\mathcal{J}$ is well defined.  In this sense, we have  
 by H\"{o}lder's inequality 
 \begin{equation*}
 \begin{split}
 |\mathcal{J}(t)|\leq \frac{(\lambda_{2}(t))^{1/2}\|v(t)\|_{2}}{\eta(t)}\left\|\varphi_{a}\left(\frac{\cdot}{\lambda_{1}(t)}\right)\right\|_{\infty}.
 \end{split}
 \end{equation*}
Thus, by \eqref{e1}
\begin{equation*}
\begin{split}
|\mathcal{J}(t)|&\lesssim \frac{(\lambda_{2}(t))^{1/2}\|v(t)\|_{2}}{\eta(t)}\\
&\lesssim \frac{(\lambda_{2}(t))^{1/2}}{\eta(t)}\left(\frac{I_{3}[0]}{|\alpha|}+\frac{2\gamma}{\alpha}\|u_{0}\|_{2}\|\partial_{x}u(t)\|_{2}\right).
\end{split}
\end{equation*}
 Next, from Theorem \ref{th1} we get 
 \begin{equation*}
 \sup_{t\gg 1}|\mathcal{J}(t)|<\infty,
 \end{equation*}
whenever 
\begin{equation}\label{l1}
0<p_{1}\leq \frac{2}{p_{2}+2}.
\end{equation}
 In the following we will suppress the dependence on the variables in order to simplify the notation unless  it would  be necessary.
 
 Next,
 \begin{equation}\label{energy}
 \begin{split}
  \frac{\mathrm{d}}{\mathrm{d}t}\mathcal{J}(t)
 &=\underbrace{\frac{1}{\eta(t)}\int_{\mathbb{R}}\partial_{t}v\varphi_{a}\left(\frac{x}{\lambda_{1}(t)}\right)\phi_{b}\left(\frac{x}{\lambda_{2}(t)}\right)\,\mathrm{d}x}_{A_{1}(t)}\\
 &\underbrace{-\frac{\lambda_{1}'(t)}{\eta(t)\lambda_{1}(t)}\int_{\mathbb{R}}v\varphi_{a}'\left(\frac{x}{\lambda_{1}(t)}\right)\left(\frac{x}{\lambda_{1}(t)}\right)\phi_{b}\left(\frac{x}{\lambda_{2}(t)}\right)\,\mathrm{d}x}_{A_{2}(t)}\\
 &\underbrace{-\frac{\lambda_{2}'(t)}{\eta(t)\lambda_{2}(t)}\int_{\mathbb{R}}v\varphi_{a}\left(\frac{x}{\lambda_{1}(t)}\right)\left(\frac{x}{\lambda_{1}(t)}\right)\phi_{b}'\left(\frac{x}{\lambda_{2}(t)}\right)\,\mathrm{d}x}_{A_{3}(t)}\\
 & \underbrace{-\frac{\eta'(t)}{\eta^{2}(t)}\int_{\mathbb{R}}v\varphi_{a}\left(\frac{x}{\lambda_{1}(t)}\right)\phi_{b}\left(\frac{x}{\lambda_{2}(t)}\right)\,\mathrm{d}x}_{A_{4}(t)}.
 \end{split}
 \end{equation}
 
For $A_{1}$  we  combine \eqref{s-kdv} and integration by parts whence we obtain 
\begin{equation}
\begin{split}
A_{1}(t)
&=-\frac{\gamma}{\lambda_{1}(t)\eta(t)}\int_{\mathbb{R}}|u|^{2}\varphi_{a}'\left(\frac{x}{\lambda_{1}(t)}\right)\phi_{b}\left(\frac{x}{\lambda_{2}(t)}\right)\mathrm{d}x\\
&\quad  -\frac{\gamma}{\eta(t)\lambda_{2}(t)}\int_{\mathbb{R}}|u|^{2}\varphi_{a}\left(\frac{x}{\lambda_{1}(t)}\right)\phi_{b}'\left(\frac{x}{\lambda_{2}(t)}\right)\mathrm{d}x\\
&\quad  -\frac{1}{2\eta(t)\lambda_{1}(t)}\int_{\mathbb{R}}v^{2}\varphi_{a}'\left(\frac{x}{\lambda_{1}(t)}\right)\phi_{b}\left(\frac{x}{\lambda_{2}(t)}\right)\mathrm{d}x\\
&\quad -\frac{1}{2\eta(t)\lambda_{2}(t)}\int_{\mathbb{R}}v^{2}\varphi_{a}\left(\frac{x}{\lambda_{1}(t)}\right)\phi_{b}'\left(\frac{x}{\lambda_{2}(t)}\right)\mathrm{d}x\\
&\quad  +\frac{1}{\eta(t)\lambda_{1}^{3}(t)}\int_{\mathbb{R}}v \varphi_{a}'''\left(\frac{x}{\lambda_{1}(t)}\right)\phi_{b}\left(\frac{x}{\lambda_{2}(t)}\right)\mathrm{d}x\\
&\quad  +\frac{3}{\eta(t)\lambda_{1}^{2}(t)\lambda_{2}(t)}\int_{\mathbb{R}}v \varphi_{a}''\left(\frac{x}{\lambda_{1}(t)}\right)\phi_{b}'\left(\frac{x}{\lambda_{2}(t)}\right)\mathrm{d}x\\
&\quad +\frac{3}{\eta(t)\lambda_{1}(t)\lambda_{2}^{2}(t)}\int_{\mathbb{R}}v \varphi_{a}'\left(\frac{x}{\lambda_{1}(t)}\right)\phi_{b}''\left(\frac{x}{\lambda_{2}(t)}\right)\mathrm{d}x\\
&\quad  +\frac{1}{\eta(t)\lambda_{2}^{3}(t)}\int_{\mathbb{R}}v \varphi_{a}\left(\frac{x}{\lambda_{1}(t)}\right)\phi_{b}'''\left(\frac{x}{\lambda_{2}(t)}\right)\mathrm{d}x\\
&=A_{1,1}(t)+A_{1,2}(t)+A_{1,3}(t)+A_{1,4}(t)+A_{1,5}(t)+A_{1,6}(t)\\
&\quad +A_{1,7}(t)+A_{1,8}(t).
\end{split}
\end{equation}
Up to a constant, the term $A_{1,1}$ is the quantity to be estimated  after integrating in time. Instead, for $A_{1,2}$ we have the following bound 
\begin{equation}\label{p1}
|A_{1,2}(t)|\lesssim_{\gamma, \|u_{0}\|_{2}} \frac{1}{\lambda_{2}(t)\eta(t)}\in L^{1}\left(\{t\gg 1\}\right),
\end{equation}
 since $p_{2}>1.$
 
 The term $A_{1,3}$ corresponds to the contribuiton coming from the KdV dynamics that we  want to   estimate after integrating in time. Nevertheless,   for  $A_{1,4}$ the situation is quite differet  and we  shall proceed in a different manner.
 
 By using   H\"{o}lder's inequality we get 
 \begin{equation}\label{p2}
 \begin{split}
 |A_{1,4}(t)|&\lesssim_{\gamma,\alpha,\|u_{0}\|_{2},\|u\|_{L^{\infty}_{t}H^{1}}} \frac{1}{\eta(t)\lambda_{2}(t)}\in L^{1}\left(\{t\gg 1\}\right),
 \end{split}
 \end{equation} 
 since $p_{2}>1.$

Next, we have 
\begin{equation}\label{p3}
\begin{split}
|A_{1,5}(t)|&\lesssim_{\alpha,\gamma,\|u_{0}\|_{2},\|u\|_{L^{\infty}_{t}H^{1}}}\frac{1}{\eta(t)\lambda_{1}^{5/2}(t)}\in L^{1}\left(\{t\gg 1\}\right),
\end{split}
\end{equation} 
 since $p_{1}>0.$
 
 For $A_{1,6}$ the  we obtain the following bound
 \begin{equation}\label{p4}
 |A_{1,6}(t)|\lesssim_{\alpha,\gamma,\|u_{0}\|_{2},\|u\|_{L^{\infty}_{t}H^{1}}} \frac{1}{\eta(t)\lambda_{1}^{3/2}(t)\lambda_{2}(t)}\in L^{1}\left(\{t\gg 1\}\right)
 \end{equation}
 whenever $p_{1},p_{2}>0.$
 
  For $A_{1,7}$ we have 
 \begin{equation}\label{p5}
| A_{1,7}(t)|\lesssim_{\alpha,\gamma,\|u_{0}\|_{2},\|u\|_{L^{\infty}_{t}H^{1}}}\frac{1}{\eta(t)\lambda_{1}^{2p_{2}+\frac{1}{2}}(t)}\in L^{1}\left(\{t\gg 1\}\right),
 \end{equation}
 since $p_{2}>1.$
 
 The last term  coming from $A_{1}$  corresponds to $A_{1,8}$ whence we obtain after applying H\"{o}lder's inequality 
 \begin{equation}\label{p6}
 |A_{1,8}(t)|\lesssim_{\alpha,\gamma,\|u_{0}\|_{2},\|u\|_{L^{\infty}_{t}H^{1}}} \frac{1}{\eta(t)\lambda_{1}^{\frac{5p_{2}}{2}}(t)}\in L^{1}\left(\{t\gg 1\}\right),
 \end{equation}
  since $p_{2}>1.$
  
  Next, we estimate $A_{2}.$ By  Young's inequality we have the following:
  For $\epsilon>0,$
  \begin{equation*}
  \begin{split}
| A_{2}(t)|&=\left| -\frac{\lambda_{1}'(t)}{\eta(t)\lambda_{1}(t)}\int_{\mathbb{R}}v\varphi_{a}'\left(\frac{x}{\lambda_{1}(t)}\right)\left(\frac{x}{\lambda_{1}(t)}\right)\phi_{b}\left(\frac{x}{\lambda_{2}(t)}\right)\,\mathrm{d}x\right|\\
&\leq\frac{1}{4\epsilon}\left| \frac{\lambda_{1}'(t)}{\eta(t)\lambda_{1}(t)}\right|\int_{\mathbb{R}}v^{2}\varphi_{a}'\left(\frac{x}{\lambda_{1}(t)}\right)\phi_{b}\left(\frac{x}{\lambda_{2}(t)}\right)\,\mathrm{d}x\\
&\quad +\epsilon \left|\frac{\lambda_{1}'(t)}{\eta(t)\lambda_{1}(t)}\right|\int_{\mathbb{R}}\varphi_{a}'\left(\frac{x}{\lambda_{1}(t)}\right)\left|\frac{x}{\lambda_{1}(t)}\right|^{2}\phi_{b}\left(\frac{x}{\lambda_{2}(t)}\right)\,\mathrm{d}x,
  \end{split}
  \end{equation*}
whence   after taking  $\epsilon=|\lambda_{1}'(t)|>0$ we obtain  
 \begin{equation*}
\begin{split}
| A_{2}(t)|
&\leq\ \frac{1}{4\eta(t)\lambda_{1}(t)}\int_{\mathbb{R}}v^{2}\varphi_{a}'\left(\frac{x}{\lambda_{1}(t)}\right)\phi_{b}\left(\frac{x}{\lambda_{2}(t)}\right)\,\mathrm{d}x\\
&\quad + \frac{(\lambda_{1}'(t))^{2}}{\eta(t)}\|\phi_{b}\|_{\infty}\|(\cdot )^2\varphi_{a}'(\cdot)\|_{1}.
\end{split}
\end{equation*}
  Note that the first term in the r.h.s is up to constant   the quantity to be estimated after integrating in time. Instead, the second term  in the r.h.s  satisfies 
  \begin{equation*}
  \frac{(\lambda_{1}'(t))^{2}}{\eta(t)}\in L^{1}\left(\{t\gg1\}\right),
  \end{equation*}
  whenever $0<p_{1}<\frac{2}{3}.$
  
  The situation is more delicate when we try to estimate $A_{3},$ so that we will consider the following auxiliar function $\theta(t)=t^{1-p_{1}},$  then 
  by Young's inequality we obtain 
  \begin{equation*}
  \begin{split}
  |A_{3}(t)|&=\left|-\frac{\lambda_{2}'(t)}{\eta(t)\lambda_{2}(t)}\int_{\mathbb{R}}v\varphi_{a}\left(\frac{x}{\lambda_{1}(t)}\right)\left(\frac{x}{\lambda_{2}(t)}\right)\phi_{b}'\left(\frac{x}{\lambda_{2}(t)}\right)\,\mathrm{d}x\right|\\
&\lesssim   \left|\frac{\lambda_{2}'(t)\theta(t)}{\eta(t)\lambda_{2}(t)}\right
|\int_{\mathbb{R}}v^{2}\left|\varphi_{a}\left(\frac{x}{\lambda_{1}(t)}\right)\right|^{2}\mathrm{d}x\\
&\quad + \left|\frac{\lambda_{2}'(t)}{\eta(t)\theta(t)\lambda_{2}(t)}\right|\int_{\mathbb{R}}\left|\frac{x}{\lambda_{2}(t)}\right|^{2}\left|\phi_{b}'\left(\frac{x}{\lambda_{2}(t)}\right)\right|^{2}\mathrm{d}x\\
&\lesssim_{\alpha,\gamma,\|u_{0}\|_{2},\|u\|_{L^{\infty}_{t}H^{1}}} \left|\frac{\theta(t)}{t\eta(t)}\right|+\left|\frac{\lambda_{2}(t)}{t\eta(t)\theta(t)}\right|\in L^{1}\left(\{t\gg1\}\right),
  \end{split}
  \end{equation*}
  since $0<p_{1}\leq \frac{2}{p_{2}+2}$ and $r_{2}>1.$
  
 To handle $A_{4}$ we combine H\"{o}lder's inequality, Theorem \ref{th1}, \eqref{e1} and  \eqref{l1} to obtain  
 \begin{equation*}
 \begin{split}
  A_{4}(t)&\leq  \left|\frac{\eta'(t)(\lambda_{2}(t))^{1/2}}{\eta^{2}(t)}\right|\|v(t)\|_{2}\|\phi_{a}\|_{2}\|\varphi\|_{\infty}\\
  &\lesssim_{\gamma,\alpha ,\|u_{0}\|_{2}} \left|\frac{\eta'(t)(\lambda_{2}(t))^{1/2}}{\eta^{2}(t)}\right|
  \in L^{1}\left(\{t\gg1\}\right),
 \end{split}
 \end{equation*}  
 since $0<p_{1}\leq \frac{2}{p_{2}+2}$ and $r_{2}>1.$

Finally, we get after integrating in time that
 \begin{equation}\label{e6.1}
 \begin{split}
 &\int\limits_{\{t\gg1\}}\frac{1}{\eta(t)\lambda_{1}(t)}\int_{\mathbb{R}}v^{2}\varphi_{a}'\Big(\frac{x}{\lambda_{1}(t)}\Big)\phi_{b}\left(\frac{x}{\lambda_{2}(t)}\right)\,\mathrm{d}x\,\mathrm{d}t\\
 &\quad +\int\limits_{\{t\gg1\}}\frac{1}{\eta(t)\lambda_{1}(t)}\int\limits_{\mathbb{R}}|u|^{2}\varphi_{a}'\Big(\frac{x}{\lambda_{1}(t)}\Big)\phi_{b}\left(\frac{x}{\lambda_{2}(t)}\right)\,\mathrm{d}x\,\mathrm{d}t
<\infty,
 \end{split}
 \end{equation}
 for any $a,b>0.$
 
Since $\frac{1}{\eta(t)\lambda_{1}(t)}=\frac{1}{t\ln t}\notin L^{1}\left(\left\{t\gg1\right\}\right)$  we can guarantee that there exist a sequence of positive time $(t_{n})_{n\geq 1}$  with $t_{n}\uparrow \infty $ as $n$ goes to inifinity such that 
\begin{equation*}
\lim_{n\rightarrow \infty}\int_{\Omega_{p_{1}}(t_{n})}v^{2}(t_{n},x)\,\mathrm{d}x=\lim_{n\rightarrow \infty}\int\limits_{\Omega_{p_{1}}(t_{n})}|u|^{2}(t_{n},x)\,\mathrm{d}x=0,
\end{equation*}
where $\Omega_{p_{1}}(t)$ is as in \eqref{set}.

\end{proof}
\section{Proof of Theorem \ref{main3} }
\begin{proof}
We start by defining the functional 
\begin{equation}\label{functional-2}
\mathcal{I}(t)=\frac{\theta}{2\eta(t)}\int_{\mathbb{R}}v^{2}(t,x)\varphi_{l}\Big(\frac{x}{\lambda_{1}(t)}\Big)\,\mathrm{d}x+\frac{\mu}{\eta(t)}\Ima\int_{\mathbb{R}}u(t,x)\overline{\partial_{x}u(x,t)}\varphi_{l}\Big(\frac{x}{\lambda_{1}(t)}\Big)\mathrm{d}x,
\end{equation}
where $\theta,\mu,l$ are real constant  to be chosen .

We require  $\mathcal{I}$ to  be  well defined.  Indeed, the Cauchy-Schwarz inequality yield
\begin{equation}\label{e7}
\begin{split}
\sup_{t\gg 1}\left|\mathcal{I}(t)\right|&\leq \frac{|\theta|}{2}\|v\|_{2}^{2}\left\|\varphi_{l}\left(\frac{\cdot}{\lambda_{1}(t)}\right)\right\|_{\infty}+|\mu|\|u_{0}\|_{2}\|\partial_{x}u\|_{L^{\infty}_{t}L^{2}_{x}}\left\|\varphi_{l}\left(\frac{\cdot}{\lambda_{1}(t)}\right)\right\|_{\infty}.
\end{split}
\end{equation} 
On the other hand,  from \eqref{e1} we get 
\begin{equation}\label{e8}
\begin{split}
\|v(t)\|_{2}^{2}&\leq \frac{1}{|\alpha|}|I_{3}[0]|+2\frac{|\gamma|}{|\alpha|}\int_{\mathbb{R}}\left| \Ima\left(u(t,x)\overline{\partial_{x}u(t,x)}\right)\right|\\
&\leq  \frac{1}{|\alpha|}|I_{3}[0]|+2\frac{|\gamma|}{|\alpha|}\|u_{0}\|_{2}\|\partial_{x}u(t)\|_{2},\quad \gamma,\alpha\neq 0.
\end{split}
\end{equation}
Therefore, combining \eqref{e7} and \eqref{e8} it follows that
\begin{equation*}
\sup_{t\gg 1} \left|\mathcal{I}(t)\right|<\infty.
\end{equation*}

Next,  combining integration by parts and \eqref{s-kdv} we obtain 
\begin{equation}\label{e2}
\begin{split}
&\frac{\mathrm{d}}{\mathrm{d}t}\mathcal{I}(t)=\\
&\underbrace{\frac{-\theta\gamma}{\eta(t)}\int_{\mathbb{R}}\partial_{x}v|u|^{2}\varphi_{l}\left(\frac{x}{\lambda_{1}(t)}\right)\,\mathrm{d}x}_{B_{1}(t)}\underbrace{-\frac{\theta\gamma}{\eta(t)\lambda_{1}(t)}\int_{\mathbb{R}}v|u|^{2}\varphi_{l}'\left(\frac{x}{\lambda_{1}(t)}\right)\,\mathrm{d}x}_{B_{2}(t)}\\
&\quad \underbrace{+\frac{\theta}{3\eta(t)\lambda_{1}(t)}\int_{\mathbb{R}}v^{3}\varphi_{l}'\left(\frac{x}{\lambda_{1}(t)}\right)\,\mathrm{d}x}_{B_{3}(t)}\underbrace{-\frac{3\theta}{2\eta(t)\lambda_{1}(t)}\int_{\mathbb{R}}\left(\partial_{x}v\right)^{2}\varphi_{l}'\left(\frac{x}{\lambda_{1}(t)}\right)\,\mathrm{d}x}_{B_{4}(t)}\\
&\quad \underbrace{+\frac{\theta}{2\eta(t)\lambda_{1}^{3}(t)}\int_{\mathbb{R}}v^{2}\varphi_{l}'''\left(\frac{x}{\lambda_{1}(t)}\right)\,\mathrm{d}x}_{B_{5}(t)}\underbrace{-\frac{\theta\lambda_{1}'(t)}{2\eta(t)\lambda_{1}(t)}\int_{\mathbb{R}}v^{2}\varphi_{l}'\left(\frac{x}{\lambda_{1}(t)}\right)\left(\frac{x}{\lambda_{1}(t)}\right)\,\mathrm{d}x}_{B_{6}(t)}\\
&\quad \underbrace{-\frac{\theta\eta'(t)}{2\eta^{2}(t)}\int_{\mathbb{R}}v^{2}\varphi_{l}\left(\frac{x}{\lambda_{1}(t)}\right)\,\mathrm{d}x}_{B_{7}(t)} \underbrace{-\frac{\mu\eta'(t)}{\eta^{2}(t)}\Ima\int_{\mathbb{R}}u\overline{\partial_{x}u}\varphi_{l}\left(\frac{x}{\lambda_{1}(t)}\right)\,\mathrm{d}x}_{B_{8}(t)}\\
&\quad \underbrace{-\frac{2\mu}{\eta(t)\lambda_{1}(t)}\int_{\mathbb{R}}\left|\partial_{x}u\right|^{2}\varphi_{l}'\left(\frac{x}{\lambda_{1}(t)}\right)\,\mathrm{d}x}_{B_{9}(t)}\underbrace{+\frac{\alpha\mu}{\eta(t)}\int_{\mathbb{R}}\partial_{x}v|u|^{2}\varphi_{l}\left(\frac{x}{\lambda_{1}(t)}\right)\,\mathrm{d}x}_{B_{10}(t)}\\
&\quad \underbrace{-\frac{\mu\beta}{2\eta(t)\lambda_{1}(t)}\int_{\mathbb{R}}|u|^{4}\varphi_{l}'\left(\frac{x}{\lambda_{1}(t)}\right)\,\mathrm{d}x}_{B_{11}(t)} \underbrace{+\frac{\mu}{2\eta(t)\lambda_{1}^{3}(t)}\int_{\mathbb{R}}|u|^{2}\varphi_{l}'''\left(\frac{x}{\lambda_{1}(t)}\right)\,\mathrm{d}x}_{B_{12}(t)}\\
&\quad+ \underbrace{\frac{\mu\lambda_{1}'(t)}{\eta(t)\lambda_{1}(t)}\Ima\int_{\mathbb{R}}u\overline{\partial_{x}u}\varphi_{l}'\left(\frac{x}{\lambda_{1}(t)}\right)\left(\frac{x}{\lambda_{1}(t)}\right)\,\mathrm{d}x}_{B_{13}(t)}.
\end{split}
\end{equation}
First, we  chose  $\theta>0$ and   $\mu=\frac{\gamma\theta}{\alpha}$ note that  this last  choice  is well defined according to our hypothesis, then
   $$B_{1}+B_{10}=0.$$

Next,    we focus our attention in to provide  the required upper bounds  for the remainder terms.

In this sense,  we have for  $B_{2}$
\begin{equation}\label{e1.1}
\begin{split}
|B_{2}(t)|&=\left|-\frac{\theta\gamma}{\eta(t)\lambda_{1}(t)}\int_{\mathbb{R}}v|u|^{2}\varphi_{l}'\left(\frac{x}{\lambda_{1}(t)}\right)\,\mathrm{d}x\right|\\
&\lesssim \frac{\theta|\gamma|}{\eta(t)\lambda_{1}(t)}\int_{\mathbb{R}} v^{2}\varphi_{l}'\left(\frac{x}{\lambda_{1}(t)}\right)\,\mathrm{d}x+\frac{\theta|\gamma|}{\eta(t)\lambda_{1}(t)}\int_{\mathbb{R}} |u|^{4}\varphi_{l}'\left(\frac{x}{\lambda_{1}(t)}\right)\,\mathrm{d}x\\
&=B_{2,1}(t)+B_{2,2}(t).
\end{split}
\end{equation}
Notice that if we choose $l>0,$ satisfying 
\begin{equation*}
\frac{1}{a}+\frac{1}{b}\leq \frac{1}{l},
\end{equation*}
 we obtain   by \eqref{e6.1} that  $B_{2,1}\in L^{1}\left(\{t\gg 1\}\right),$ while $B_{2,2}$ remains to be estimated. Later on, we will provide the   required arguments to bound this term.

Note that, combining the properties of $\varphi,$ i.e.
\begin{equation*}
|\varphi'''(x)|\lesssim \varphi'(x)\quad\mbox{for all}\quad x\in\mathbb{R},
\end{equation*}
we get 
\begin{equation*}
\begin{split}
|B_{5}(t)|&\lesssim  \frac{\theta}{\eta(t)\lambda_{1}^{3}(t)}\int_{\mathbb{R}}v^{2}\left|\varphi_{l}'''\left(\frac{x}{\lambda_{1}(t)}\right)\right|\,\mathrm{d}x\\
&\lesssim \frac{1}{\eta(t)\lambda_{1}^{3}(t)}\int_{\mathbb{R}}v^{2}\varphi_{l}'\left(\frac{x}{\lambda_{1}(t)}\right)\,\mathrm{d}x\in L^{1}\left(\left\{t\gg 1\right\}\right),
\end{split}
\end{equation*} 
since $p_{1}>0.$

Next, 
\begin{equation*}
|B_{6}(t)|\lesssim_{\alpha,\gamma} \frac{ |I_{3}[0]|}{t^{r_{1}+1}}+\frac{\|u_{0}\|_{2}\|\partial_{x}u\|_{L^{\infty}_{t}L^{2}_{x}}}{ t^{r_{1}+1}}\in L^{1}\left(\{t\gg 1\}\right), 
\end{equation*}
since $r_{1}>0.$
 
 Also, 
 \begin{equation*}
 |B_{7}(t)|\lesssim_{\alpha,\gamma} \frac{ |I_{3}[0]|}{t^{r_{1}+1}}+\frac{\|u_{0}\|_{2}\|\partial_{x}u\|_{L^{\infty}_{t}L^{2}_{x}}}{ t^{r_{1}+1}}\in L^{1}\left(\{t\gg 1\}\right),
 \end{equation*}
and 
\begin{equation*}
|B_{8}(t)|\lesssim \frac{\|u_{0}\|_{2}\|u\|_{L^{\infty}_{t}H^{1}_{x}}}{t^{r_{1}+1}} \in L^{1}\left(\{t\gg 1\}\right)).
 \end{equation*}
Concerninig the term $B_{12}$ we  have that 
\begin{equation*}
|B_{12}(t)|\lesssim \frac{1}{\eta(t)\lambda_{1}^{3}(t)}\int_{\mathbb{R}}|u|^{2}\varphi_{l}'\left(\frac{x}{\lambda_{1}(t)}\right)\,\mathrm{d}x\in L^{1}\left(\{t\gg1\}\right),
\end{equation*}
since $p_{1}>0.$

 We end the first set of estimates bounding $B_{13}$. Indeed,
 \begin{equation*}
 \begin{split}
 |B_{13}(t)|&=\left|\frac{\mu\lambda_{1}'(t)}{\eta(t)\lambda_{1}(t)}\Ima\int_{\mathbb{R}}u\overline{\partial_{x}u}\varphi_{l}'\left(\frac{x}{\lambda_{1}(t)}\right)\left(\frac{x}{\lambda_{1}(t)}\right)\,\mathrm{d}x\right|\\
 &\leq \mu\Big|\frac{\lambda_{1}'(t)}{\eta(t)\lambda_{1}(t)}\Big|\int_{\mathbb{R}}|u||\partial_{x}u|\varphi_{l}'\Big(\frac{x}{\lambda_{1}(t)}\Big)\left|\frac{x}{\lambda_{1}(t)}\right|\,\mathrm{d}x\\
 &\leq \mu\epsilon \Big|\frac{\lambda_{1}'(t)}{\eta(t)\lambda_{1}(t)}\Big|\int_{\mathbb{R}}|u|^{2}\varphi_{l}'\Big(\frac{x}{\lambda_{1}(t)}\Big)
 \left|\frac{x}{\lambda_{1}(t)}\right|^{2}\,\mathrm{d}x\\
 &\hskip10pt +\frac{\mu}{4\epsilon}\Big|\frac{\lambda_{1}'(t)}{\eta(t)\lambda_{1}(t)}\Big|\int_{\mathbb{R}}\big|\partial_{x}u\big|^{2}\varphi_{l}'\Big(\frac{x}{\lambda_{1}(t)}\Big)\,\mathrm{d}x
 \end{split}
 \end{equation*}
 	whence we obtain after taking $\epsilon=\frac{|\lambda_{1}'(t)|}{2}>0,$ the bound 
 	\begin{equation*}
 	\begin{split}
 	|B_{13}(t)|
 	&\leq \frac{\mu}{2\eta(t)\lambda_{1}(t)}\int_{\mathbb{R}} |\partial_{x}u|^{2}\varphi_{l}'\left(\frac{x}{\lambda_{1}(t)}\right)\mathrm{d}x\\
 	&\quad +\frac{\mu}{2}\frac{(\lambda_{1}'(t))^{2}}{\eta(t)\lambda_{1}(t)}\int_{\mathbb{R}} |u|^{2}\varphi_{l}'\left(\frac{x}{\lambda_{1}(t)}\right)\left|\frac{x}{\lambda_{1}(t)}\right|^{2}\mathrm{d}x.
 	\end{split}
 	\end{equation*}
 	Thus, 
 	\begin{equation*}
 	\begin{split}
 	\frac{(\lambda'(t))^{2}}{\eta(t)\lambda_{1}(t)}\int_{\mathbb{R}} |u|^{2}\varphi_{l}'\Big(\frac{x}{\lambda_{1}(t)}\Big)\left|\frac{x}{\lambda_{1}(t)}\right|^{2}\,\mathrm{d}x&\lesssim_{\|u_{0}\|_{2}}\frac{(\lambda_{1}'(t))^{2}}{\eta(t)\lambda_{1}(t)}\in L^{1}\left(\{t\gg1\}\right),
 	\end{split}
 	\end{equation*}
 	since $0<p_{1}<1.$	The  remainder term will be estimated after integrating in time.

Finally, we  bound  the terms that require  a  different  approach to the one used for the previous terms. 

 Note  that, for $\epsilon>0$  the following inequality holds
\begin{equation}\label{e1.2}
\begin{split}
|B_{3}(t)|&=\left|\frac{\theta}{3\eta(t)\lambda_{1}(t)}\int_{\mathbb{R}}v^{3}\varphi'_{l}\left(\frac{x}{\lambda_{1}(t)}\right)\,\mathrm{d}x\right|\\
&\leq \frac{\theta\epsilon}{3\eta(t)\lambda_{1}(t)}\int_{\mathbb{R}}v^{6}\varphi_{l}'\left(\frac{x}{\lambda_{1}(t)}\right)\,\mathrm{d}x+\frac{3\theta}{12 (4\epsilon)^{1/3}\eta(t)\lambda_{1}(t)}\int_{\mathbb{R}}v^{2}\varphi_{l}'\left(\frac{x}{\lambda_{1}(t)}\right)\,\mathrm{d}x\\
&=B_{3,1}(t)+B_{3,2}(t).
\end{split}
\end{equation}
We shall stress that from \eqref{e6.1} it is clear that  independently of $\epsilon,$ the term  $B_{3,2}\in L^{1}\left(\{t\gg 1\}\right).$

Next, we focus our attention on $B_{3,1}.$
In this sense, we proceed  with a modification of the argument   in \cite{km}, so, we consider  a function $\chi:\mathbb{R}\longrightarrow\mathbb{R}$ satisfying $\chi\equiv 1$ on $[0,1]$ and $\chi\equiv0$ on $(-\infty-1]\cup[2,\infty).$

Thus, if we set $\chi_{n}(x):=\chi(x-n),$ then  we get 
 \begin{equation}\label{eq3.1}
\begin{split}
\int_{\mathbb{R}}v^{6}\varphi_{l}'\left(\frac{x}{\lambda_{1}(t)}\right)\,\mathrm{d}x&\leq \sum_{n\in\mathbb{Z}}\left(\int_{\mathbb{R}}v^{6}\chi_{n}^{6}\,\mathrm{d}x\right)\left(\sup_{x\in[n,n+1 ]}\varphi_{l}'\left(\frac{x}{\lambda_{1}(t)}\right)\right).\\
\end{split}
\end{equation}
Next,  by the Gagliardo-Nirenberg-Sobolev inequality  in its optimal form 
\begin{equation}\label{eq1.1}
\begin{split}
\sum_{n\in\mathbb{Z}}\|v\chi_{n}\|_{6}^{6}&\leq C_{\mathrm{opt}}\sum_{n\in\mathbb{Z}}\left(\int_{\mathbb{R}}|\partial_{x}(v\chi_{n})|^{2}\,\mathrm{d}x\right)\|v\chi_{n}\|_{2}^{4}\\
&\leq C_{\mathrm{opt}}\|v\|_{2}^{4}\sum_{n\in\mathbb{Z}}\int_{\mathbb{R}}|\partial_{x}(v\chi_{n})|^{2}\,\mathrm{d}x\\
&\leq 2 C_{\mathrm{opt}}\|v\|_{2}^{4}\sum_{n\in\mathbb{Z}}\int_{\mathbb{R}}(\partial_{x}v)^{2}\chi_{n}^{2}\,\mathrm{d}x + 2 C_{\mathrm{opt}}\|v\|_{2}^{4}\sum_{n\in\mathbb{Z}}\int_{\mathbb{R}}(v\partial_{x}\chi_{n})^{2}\,\mathrm{d}x\\
&\leq 2 C_{\mathrm{opt}}\rho^{2}\sum_{n\in\mathbb{Z}}\int_{\mathbb{R}}(\partial_{x}v)^{2}\chi_{n}^{2}\,\mathrm{d}x +2 cC_{\mathrm{opt}}\rho^{2}\sum_{n\in\mathbb{Z}}\int_{\mathbb{R}}(v\phi_{n})^{2}\,\mathrm{d}x,
\end{split}
\end{equation}
where  
\begin{equation*}
\rho:= \frac{|I_{3}[0]|}{|\alpha|}+\frac{2\gamma}{\alpha}\|u_{0}\|_{2}\|u\|_{L^{\infty}_{t}H^{1}},
\end{equation*}
and $\phi_{n}(x):=\phi(x-n),$  being $\phi$ a $C^{\infty}(\mathbb{R})$  function such that $\phi\equiv 1$ on $[-1,2]$ and $\phi\equiv0 $ on $(-\infty,-2]\cup[3,\infty).$ 

Before start we shall highlight the following relationship given on compacts sets for the weighted function $\varphi_{l}'.$ More precisely,
\begin{equation}\label{eq2.1}
\sup_{x\in[n,n+1]}\varphi_{l}'\left(\frac{x}{\lambda_{1}(t)}\right)\leq \max \left\{e^{-\frac{1}{l\lambda_{1}(t)}},e^{\frac{1}{l\lambda_{1}(t)}}\right\}\inf_{x\in[n,n+1]}\varphi_{l}'\left(\frac{x}{\lambda_{1}(t)}\right).
\end{equation}
Finally, combining \eqref{eq2.1} and \eqref{eq1.1}   we get  from \eqref{eq3.1} that 
\begin{equation*}
\begin{split}
&\int_{\mathbb{R}}v^{6}\varphi_{l}'\left(\frac{x}{\lambda_{1}(t)}\right)\,\mathrm{d}x\\
&\leq    2C_{\mathrm{opt}}\rho^{2} \max \left\{e^{-\frac{1}{l\lambda_{1}(t)}},e^{\frac{1}{l\lambda_{1}(t)}}\right\}\sum_{n\in\mathbb{Z}}\int_{\mathbb{R}}(\partial_{x}v)^{2}\varphi_{l}'\left(\frac{x}{\lambda_{1}(t)}\right)\chi_{n}^{2}\,\mathrm{d}x\\
&\quad +2 cC_{\mathrm{opt}}\rho^{2} \max \left\{e^{-\frac{1}{l\lambda_{1}(t)}},e^{\frac{1}{l\lambda_{1}(t)}}\right\}\sum_{n\in\mathbb{Z}}\int_{\mathbb{R}}v^{2}\varphi_{l}'\left(\frac{x}{\lambda_{1}(t)}\right)\phi_{n}^{2}\,\mathrm{d}x\\
&\leq 2CC_{\mathrm{opt}}\rho^{2} \int_{\mathbb{R}}(\partial_{x}v)^{2}\varphi_{l}'\left(\frac{x}{\lambda_{1}(t)}\right)\,\mathrm{d}x+2C cC_{\mathrm{opt}}\rho^{2} \int_{\mathbb{R}}v^{2}\varphi_{l}'\left(\frac{x}{\lambda_{1}(t)}\right)\,\mathrm{d}x.
\end{split}
\end{equation*}
Therefore,  
\begin{equation*}
|B_{3,1}(t)|\leq\frac{2\theta\epsilon CC_{\mathrm{opt}}\rho^{2}}{3\eta(t)\lambda_{1}(t)} \int_{\mathbb{R}}(\partial_{x}v)^{2}\varphi_{l}'\left(\frac{x}{\lambda_{1}(t)}\right)\,\mathrm{d}x+\frac{2\theta\epsilon C cC_{\mathrm{opt}}\rho^{2}}{3\eta(t)\lambda_{1}(t)} \int_{\mathbb{R}}v^{2}\varphi_{l}'\left(\frac{x}{\lambda_{1}(t)}\right)\,\mathrm{d}x.
\end{equation*}
Note  that the second  term on the r.h.s is bounded after integrating in time, this is a consequence of  \eqref{e6.1}. To handle the first term we notice  
 that this is  up to constant the quantity to be estimated after integrating in time. Thus,  for our proposes it is enough to take 
 $\epsilon>\frac{3}{CC_{\mathrm{opt}}\rho^{2}}$. 

Next, we   decompose as follows: 
For all  $\epsilon_{1}>0,$
 \begin{equation}\label{e1.3}
 \begin{split}
 |B_{11}(t)|&=\frac{|\mu\beta|}{2{\eta(t)}\lambda_{1}(t)}\int_{\mathbb{R}}|u|^{4}\varphi_{l}'\left(\frac{x}{\lambda_{1}(t)}\right)\,\mathrm{d}x\\
 &\leq\frac{\epsilon_{1}\mu\beta}{2{\eta(t)}\lambda_{1}(t)} \int_{\mathbb{R}}|u|^{2}\varphi_{l}'\left(\frac{x}{\lambda_{1}(t)}\right)\,\mathrm{d}x+\frac{\mu\beta}{8\epsilon_{1}{\eta(t)}\lambda_{1}(t)}\int_{\mathbb{R}}|u|^{6}\varphi_{l}'\left(\frac{x}{\lambda_{1}(t)}\right)\,\mathrm{d}x\\
 &=B_{11,1}(t)+B_{11,2}(t).
 \end{split}
 \end{equation}
 \begin{remark}
 	In the case $\beta>0,$ the decomposition \eqref{e1.3} is not required, since  the integral of $B_{11}$ is obtained   by directly from the energy estimate \eqref{e2}.  
 \end{remark}
 Note that independent of the  value  of $\epsilon_{1}$, the bound in   \eqref{e6.1} yield
 \begin{equation*}
 \int_{\{t\gg1\}}|B_{11,1}(t)|\mathrm{d}t<\infty.
 \end{equation*}

 Thus, by using the same notation as in \eqref{eq3.1} it follows that
 \begin{equation}\label{eq3}
 \begin{split}
 \int_{\mathbb{R}}|u|^{6}\varphi_{l}'\left(\frac{x}{\lambda_{1}(t)}\right)\,\mathrm{d}x&\leq \sum_{n\in\mathbb{Z}}\left(\int_{\mathbb{R}}|u|^{6}\chi_{n}^{6}\,\mathrm{d}x\right)\left(\sup_{x\in[n,n+1 ]}\varphi_{l}'\left(\frac{x}{\lambda_{1}(t)}\right)\right)\\
 \end{split}
 \end{equation}
 whence we get by the Gagliardo-Nirenberg-Sobolev inequality that 
 \begin{equation}\label{eq1}
 \begin{split}
 &\sum_{n\in\mathbb{Z}}\|u\chi_{n}\|_{6}^{6}\\
 &\leq C_{\mathrm{opt}}\sum_{n\in\mathbb{Z}}\left(\int_{\mathbb{R}}|\partial_{x}(u\chi_{n})|^{2}\mathrm{d}x\right)\|u\chi_{n}\|_{2}^{4}\\
 &\leq C_{\mathrm{opt}}\|u_{0}\|_{2}^{4}\sum_{n\in\mathbb{Z}}\int_{\mathbb{R}}|\partial_{x}(u\chi_{n})|^{2}\mathrm{d}x\\
 &\leq 2C_{\mathrm{opt}}\|u_{0}\|_{2}^{4}\sum_{n\in\mathbb{Z}}\int_{\mathbb{R}}|\partial_{x}u|^{2}\chi_{n}^{2}\mathrm{d}x+ 2 C_{\mathrm{opt}}\|u_{0}\|_{2}^{4}\sum_{n\in\mathbb{Z}}\int_{\mathbb{R}}|u\partial_{x}\chi_{n}|^{2}\mathrm{d}x\\
 &\leq 2 C_{\mathrm{opt}}\|u_{0}\|_{2}^{4}\sum_{n\in\mathbb{Z}}\int_{\mathbb{R}}|\partial_{x}u|^{2}\chi_{n}^{2}\mathrm{d}x +2 cC_{\mathrm{opt}}\|u_{0}\|_{2}^{4}\sum_{n\in\mathbb{Z}}\int_{\mathbb{R}}|u\phi_{n}|^{2}\mathrm{d}x,
 \end{split}
 \end{equation}
 where $\phi_{n}(x):=\phi(x-n)$  being $\phi$ a $C^{\infty}(\mathbb{R})$  function such that $\phi\equiv 1$ on $[-1,2]$ and $\phi\equiv0 $ on $(-\infty,-2]\cup[3,\infty).$ 
 
 We shall remark that 
\begin{equation}\label{eq2}
\sup_{x\in[n,n+1]}\varphi_{l}'\left(\frac{x}{\lambda_{1}(t)}\right)\leq \max \left\{e^{-\frac{1}{l\lambda_{1}(t)}},e^{\frac{1}{l\lambda_{1}(t)}}\right\}\inf_{x\in[n,n+1]}\varphi_{l}'\left(\frac{x}{\lambda_{1}(t)}\right).
\end{equation}
Finally, combining \eqref{eq2} and \eqref{eq1}   we get  from \eqref{eq3} that 
 \begin{equation*}
\begin{split}
&\int_{\mathbb{R}}|u|^{6}\varphi_{l}'\left(\frac{x}{\lambda_{1}(t)}\right)\,\mathrm{d}x\\
&\leq    2C_{\mathrm{opt}}\|u_{0}\|_{2}^{4} \max \left\{e^{-\frac{1}{l\lambda_{1}(t)}},e^{\frac{1}{l\lambda_{1}(t)}}\right\}\sum_{n\in\mathbb{Z}}\int_{\mathbb{R}}|\partial_{x}u|^{2}\varphi_{l}'\left(\frac{x}{\lambda_{1}(t)}\right)\chi_{n}^{2}\,\mathrm{d}x\\
&\quad 2 cC_{\mathrm{opt}}\|u_{0}\|_{2}^{4} \max \left\{e^{-\frac{1}{l\lambda_{1}(t)}},e^{\frac{1}{l\lambda_{1}(t)}}\right\}\sum_{n\in\mathbb{Z}}\int_{\mathbb{R}}|u|^{2}\varphi_{l}'\left(\frac{x}{\lambda_{1}(t)}\right)\phi_{n}^{2}\,\mathrm{d}x\\
&\leq 2CC_{\mathrm{opt}}\|u_{0}\|_{2}^{4} \int_{\mathbb{R}}|\partial_{x}u|^{2}\varphi_{l}'\left(\frac{x}{\lambda_{1}(t)}\right)\,\mathrm{d}x\\
&\quad +2C cC_{\mathrm{opt}}\|u_{0}\|_{2}^{4} \int_{\mathbb{R}}|u|^{2}\varphi_{l}'\left(\frac{x}{\lambda_{1}(t)}\right)\,\mathrm{d}x.
\end{split}
\end{equation*}
The same analysis can be applied to estimate the term $B_{2,2}$. 

Therefore, after  choosing  $\epsilon_{1}>0,$ satisfying $
\epsilon_{1}>2\beta CC_{\mathrm{opt}}\|u_{0}\|_{2}^{4},$
we get after  gathering the estimates corresponding to this step that  
\begin{equation}\label{e6}
\int\limits_{\{t\gg1\}}\Big(\frac{1}{\eta(t)\lambda_{1}(t)}\int\limits_{\mathbb{R}}\left(|\partial_{x}u(t,x)|^{2}+(\partial_{x}v(t,x))^{2}\Big)\varphi_{l}'\left(\frac{x}{\lambda_{1}(t)}\right)\,\mathrm{d}x\right)\mathrm{d}t<\infty;
\end{equation}

that  implies  
\begin{equation*}
\liminf_{t\rightarrow \infty}\int_{\Omega_{p_{1}}(t)}\left|\partial_{x}u(t,x)\right|^{2}\,\mathrm{d}x=\liminf_{t\rightarrow \infty}\int_{\Omega_{p_{1}}(t)} \left(\partial_{x}v(t,x)\right)^{2}\,\mathrm{d}x=0,
\end{equation*}
whenever $0<p_{1}\leq \frac{2}{p_{2}+2}$ for  $p_{2}>1.$
\end{proof}

\vspace{0.5cm}

\noindent{\bf Acknowledgements.}  The first author was partially supported by CNPq grant 305791/2018-4 and FAPERJ grant E-26/202.638/2019. The second author was partially supported by CMM Conicyt Proyecto Basal AFB170001. 
The authors are grateful to John Albert and Filipe Oliveira for useful information.

\medskip

\end{document}